\renewcommand\footnotetextcopyrightpermission[1]{}
\setlist{noitemsep,topsep=0pt,parsep=0pt,partopsep=0pt,leftmargin=*}
\newtheorem{remark}{Remark}[section]
\theoremstyle{remark}
\newcommand{\bbR}{\mathbb{R}}
\newcommand{\bbE}{\mathbb{E}}
\newcommand{\bbP}{\mathbb{P}}
\newcommand{\calS}{\mathcal{S}}
\newcommand{\bei}{\begin{itemize}}
\newcommand{\eei}{\end{itemize}}
\newcommand{\bee}{\begin{enumerate}}
\newcommand{\eee}{\end{enumerate}}
\newcommand{\calI}{\mathcal{I}}
\newcommand{\calK}{\mathcal{K}}
\newcommand{\calE}{\mathcal{E}}
\newcommand{\bx}{\mathbf{x}}
\newcommand{\bb}{\mathbf{b}}
\newcommand{\bud}{m}
\newcommand{\INDSTATE}[1][1]{\STATE\hspace{#1\algorithmicindent}}
\begin{document}

\title{Profit Maximization for Online Advertising Demand-Side Platforms}

\author{Paul Grigas, Alfonso Lobos}
\affiliation{
  \institution{University of California, Berkeley}
}
\email{pgrigas, alobos@berkeley.edu}

\author{Zheng Wen}
\affiliation{
  \institution{Adobe Research}
}
\email{zwen@adobe.com}

\author{Kuang-chih Lee}
\affiliation{
  \institution{Yahoo Inc.}
}  
\email{kclee@yahoo-inc.com}

\begin{abstract}
We develop an optimization model and corresponding algorithm for the management of a demand-side platform (DSP), whereby the DSP aims to maximize its own profit while acquiring valuable impressions for its advertiser clients. We formulate the problem of profit maximization for a DSP interacting with ad exchanges in a real-time bidding environment in a cost-per-click/cost-per-action pricing model. Our proposed formulation leads to a nonconvex optimization problem due to the joint optimization over both impression allocation and bid price decisions. We use Lagrangian relaxation to develop a tractable convex dual problem, which, due to the properties of second-price auctions, may be solved efficiently with subgradient methods. We propose a two-phase solution procedure, whereby in the first phase we solve the convex dual problem using a subgradient algorithm, and in the second phase we use the previously computed dual solution to set bid prices and then solve a linear optimization problem to obtain the allocation probability variables. On several synthetic examples, we demonstrate that our proposed solution approach leads to superior performance over a baseline method that is used in practice.
\end{abstract}

\maketitle

\section{Introduction}\label{sec:intro}

In targeted online advertising, the main goal is to figure out the best opportunities by showing an advertisement to an online user, who is most likely to take a desired action, such as ordering a product or signing up for an account. The complexity of realizing this goal is so high that advertisers need specialized technology solutions called demand-side platforms (DSP). 

In a DSP, each individual advertiser usually sets up a list of campaigns that can be thought of as plans for delivering advertisements. For each campaign, the advertiser specifies the characteristics of the audience segments that she would like to target (e.g., males, ages 18-35, who view news articles on espn.com) along with the particular media that she would like to display to the target audience (e.g., a video ad for beer). In addition, the advertiser specifies a budget amount, time schedule, pacing details, and performance goals for each campaign. The performance goals typically can be specified by minimizing cost-per-click (CPC) or cost-per-action (CPA). 

The DSP manages its active campaigns for many different advertisers simultaneously across multiple ad exchanges where ad impressions can be acquired through a real-time bidding (RTB) process. In the RTB process, the DSP interacts with several ad exchanges where bids are placed for potential impressions on behalf of those advertisers. This interaction happens in real time when an ad request is submitted to an ad exchange (which may happen, for example, when a user views a news story on a webpage). In this scenario, the DSP needs to offer a solution to decide, among the list of all campaigns associated with its advertiser clients, which campaign to bid on behalf of and how much to offer for the corresponding bid. The fundamental problem we consider here is how to make these decisions in real time to maximize the profit for the DSP while ensuring that all of its advertiser clients are satisfied.

Generally speaking, today most DSPs offer different pricing models and enhancement schemes to help advertisers manage their campaigns. Those pricing models include cost per thousand impressions (CPM), cost per click (CPC) and cost per action (CPA). Advertisers often like to choose the CPC/CPA pricing model because the return on investment (ROI) is always positive. However, this pricing model might introduce revenue loss for the DSP since the DSP only earns revenue when a click or action occurs. Therefore, in CPC/CPA pricing model, the DSP needs to convert the CPC/CPA bid to an expected cost per thousand impressions (eCPM) bid in order to sensibly participate in auctions in the RTB exchanges. In this paper, we focus on CPC/CPA pricing model as it is a very challenging problem for the DSP. 

It is challenging for a DSP to perform such profit optimization with CPC/CPA pricing model in a RTB environment for several reasons. First, top DSPs typically receive as many as a million ad requests per second. The short latency and high throughput requirements introduce extreme time sensitivity on the process. Second, a large amount of information is missing in the real time evaluation of the individual ad requests, e.g., the feedback on previous decisions normally has a long delay in practice. Therefore, most of the DSPs today only apply a greedy approach by selecting the ad with the highest bid among all the qualified ads for each incoming request. 

In this paper, we propose a novel approach based on a precise mathematical formulation to optimize the overall DSP profit. We appropriately model the uncertainty in impression arrival, auction, and click/action processes and develop an optimization formulation to maximize profit for the DSP while ensuring that each campaign remains under budget. Our formulation is aimed at optimizing with respect to both impression allocation and bid price decisions, and due to the additional complexity of accounting for both of these decisions the formulation is a large-scale, nonconvex model. However, due to the properties of second-price auctions we are able to effectively use the technique of Lagrangian relaxation. We construct a dual problem and establish that subgradients of the dual function may be efficiently computed. Our overall approach is based on a two-phase procedure, wherein we solve the dual problem in the first phase and use the dual solution to naturally recover a primal solution in the second phase. We conduct several computational experiments on synthetic datasets and demonstrate that our Lagrangian relaxation based approach is able to significantly increase DSP profits relative to a baseline greedy approach.

Revenue optimization in online advertising has been extensively studied in recent literature from different perspectives, such as optimization \cite{mehta2007adwords, balseiro2014yield, chen2011real, zhang2014optimal}, game theory and mechanism design \cite{balseiro2015repeated, maehara2015budget}, and contract design \cite{mirrokni2017deals}. Due to space limitations, we only review several directly relevant papers. \cite{balseiro2014yield, chen2014dynamic, chen2011real} focus on the publisher's revenue management problem. Specifically, \cite{balseiro2014yield, chen2014dynamic} study how publishers should optimally trade off guaranteed contracts with RTB. \cite{chen2011real} studies how a publisher should optimally allocate impressions and set up bid prices for campaigns, under the implicit assumption that the publisher is a ``central planner''. On the other hand, \cite{ciocan2012model} studies ad networks' revenue management problem based on model predictive control. Finally, \cite{zhang2014optimal} studies advertisers' optimal bidding problem in RTB. Unlike our paper, \cite{zhang2014optimal} focuses on optimal bidding and its framework does not consider impression allocation.

The rest of the paper is organized as follows. 
In Section \ref{sec:model}, we describe the notation and problem statement and we set up the model. The profit optimization formulation is presented in Section \ref{sec:optalg}. Our proposed Lagrangian relaxation based algorithm to approximately solve the profit optimization problem is specified in Section \ref{sec:LAD}. Experimental results are presented in Section \ref{sec:expts}, and we conclude with some remarks about possible future work.

\section{Model Foundations}\label{sec:model}

We assume that the planning takes place over a fixed time horizon (e.g., 24 hours). To simplify the presentation, we also assume that the DSP interacts with a single ad exchange. For our purposes, the ad exchange simply represents a pool of potential impressions that the DSP may bid on. Thus, in the likely scenario that the DSP interacts with more than one ad exchange, we may simply group all of the different ad exchanges together into a representative ad exchange.

Let $\calI$ denote the set of \emph{impression types} and let $\calK$ denote the set of all campaigns associated with advertisers managed by the DSP. Before discussing the details of our assumptions, let us describe the basic flow of events in the model. When an impression of type $i \in \calI$ is submitted to the ad exchange, a real-time second-price auction is held for which the DSP has an opportunity to bid. Thus the DSP has an opportunity to make two strategic decisions related to each real-time auction:  \emph{(i)} how to select a campaign $k \in \calK$ to bid on behalf of in the auction, and \emph{(ii)} how to set the corresponding bid price $b_{ik}$. If the DSP wins the auction on behalf of campaign $k$, then the DSP must pay the ad exchange an amount equal to the second largest price and an ad from campaign $k$ is displayed. The advertiser corresponding to campaign $k$ is charged only if a user clicks on the ad.

\paragraph{Impression Types}
It is important that the set of impression types $\calI$ represents a partition of all possible impressions that are submitted to the ad exchange. Thus, every impression submitted to the ad exchange is associated with a particular impression type $i \in \calI$. It is most natural to define $\calI$ in terms of features associated with impressions. For example, if the DSP determines that there are only two relevant attributes associated with each impression -- say gender and whether or not the viewer is 18 years or older -- then the DSP would choose $\calI = \{(M, 18-), (M, 18+), (F, 18-), (F, 18+)\}$. Then, all impressions corresponding to male viewers who are under 18 years of age would be assigned to impression type $i = (M, 18-)$, etc. Note that the construction of the set $\calI$ is part of the modeling process and consideration should be given to the trade-off between computational limitations and the potential for higher profits due to a more fine-grained construction of $\calI$. Nevertheless, the algorithmic schemes we propose in Section \ref{sec:optalg} are scalable to problem instances where the size of $\calI$ is extremely large. We use the following notation and make the following assumptions about the impression types:
\begin{itemize}
\item Let $S_i$ denote the number of impressions of type $i$ that arrive during the planning horizon, and assume that $S_i$ is a random variable with mean $s_i$. 
\end{itemize}

\paragraph{Campaigns}
Recall that $\calK$ denotes the set of all campaigns that are managed by the DSP. That is, $\calK$ is the union over all advertisers (who are managed by the DSP) of the sets of campaigns run by each advertiser. We use the following notation and make the following assumptions about the campaigns:
\begin{itemize}
\item $\bud_k$ denotes the (advertiser selected) budget for campaign $k$ during the planning horizon.
\item $\calI_k$ denotes the set of impression types that campaign $k$ targets. For example, if an advertiser wishes to create a particular campaign to target female users, then in this case $\calI_k$ would denote the set of all impression types corresponding to female users (e.g., $\calI_k = \{(F, 18-), (F, 18+)\}$ in the example described above). 
Note that each advertiser can create multiple campaigns to achieve different targeting goals.
\item In this model, it is assumed that advertisers are charged on a \emph{CPC (cost per click)} basis. That is, campaign $k$ is charged an amount $q_k > 0$, called the CPC price, each time a user clicks on an advertisement from campaign $k$. (Note that a ``click'' may also be thought of more generally as an ``action'' whereby our model easily extends to campaigns that are charged on a CPA basis. Moreover, our model may be easily extended to allow for multiple actions, each with their own rewards.)
\end{itemize}

\paragraph{Auctions}
When an impression is submitted to the ad exchange, an instantaneous real-time auction occurs to determine who gets to display an advertisement. We assume that these are \emph{second-price} auctions, which are very common in practice. In a second-price auction, the bidder who submits the highest bid is the winner, but the amount that the winner pays is the amount of the \emph{second highest} bid. It is well known that, in a second-price auction, a dominant strategy for each participant is to bid truthfully \cite{vickrey1961counterspeculation}. 
Herein we assume that the DSP takes a \emph{probabilistic} approach to modeling the behavior of the other bidders in the auction. Namely, we make the following assumptions:
\begin{itemize}
\item For each impression type $i \in I$, let $B^{\max}_i$ be a random variable representing the maximum, among all other bidders excluding the DSP, of the bid prices entered in an auction for an impression of type $i$. It is assumed that $B^{\max}_i > 0$ with probability one. Let $\rho_i(\cdot) : \bbR \to [0,1]$ denote the cumulative distribution function (CDF) of $B^{\max}_i$, so that $\rho_i(b) := \bbP(B^{\max}_i \leq b)$ is the probability that the DSP wins an auction for an impression of type $i$ when the DSP enters the bid $b$. Note that the functions $\rho_i(\cdot)$ are typically estimated using a bid landscape model (see, e.g., \cite{cui2011bid}).
(In the case of a tie, here we assume that the DSP automatically wins the auction. Our framework may be easily adapted to the case of a fair tie-breaking mechanism.)
\item Furthermore, let $\beta^{\max}_i(b) := \bbE[B^{\max}_i ~|~ B^{\max}_i \leq b]$ denote the expected value of the second highest bid price (i.e., the value of the payment to the ad exchange) given that the DSP enters a bid price of $b$ and $b$ is the largest bid price entered.
\end{itemize}

\paragraph{Click Events}
After the DSP has won an auction on behalf of campaign $k$, an ad for campaign $k$ is displayed to the user corresponding to the impression for which the auction was held. For a given impression type $i \in \calI$ and a given campaign $k \in \calK$, let $\theta_{ik} \in [0,1]$ denote the \emph{click-through-rate} for users corresponding to impression type $i$ and when the ad corresponds to campaign $k$. That is, $\theta_{ik}$ represents the fraction of users corresponding to impression type $i$ that click on an ad associated with campaign $k$, i.e., the probability that the user clicks on the ad that is shown. 
Although the true click-through-rates are not available, the DSP is typically able to leverage a vast amount of historical data and use predictive models to produce accurate predictions of these values, even when $\calK$ and $\calI$ are extremely large (see, e.g., \cite{mcmahan2013ad}). 

Finally, given an impression type $i \in \calI$ and a campaign $k \in \calK$, let $r_{ik}$ denote the \emph{expected cost per impression (eCPI)} value, namely $r_{ik} := q_k\theta_{ik}$ where $q_k$ is the CPC price defined earlier. Note that $r_{ik}$ is the expected amount of revenue that the DSP earns each time an ad for campaign $k$ is shown to an impression of type $i$, and $r_{ik}$ also corresponds to the optimal bid price when campaign $k$ has unlimited budget.

\paragraph{Decision Variables and Additional Notation}
As mentioned previously, when an auction for impression type $i \in \calI$ arrives to the ad exchange, the DSP decides which campaign $k \in \calK$ to bid of behalf of and also selects the value of the corresponding bid price. Let $\calE \subseteq \calI \times \calK$ denote the edges of an undirected bipartite graph between $\calI$ and $\calK$, whereby there is an edge $e = (i, k) \in \calE$ whenever campaign $k$ targets impression type $i$, i.e, $\calE := \{(i, k) : i \in \calI_k\}$. Let $\calK_i := \{k \in \calK : (i, k) \in \calE\}$ denote the set of campaigns that target impression type $i$.

When a new auction for impression type $i$ arrives to the ad exchange, we say that the DSP selects campaign $k$ for the auction if the DSP chooses to bid on behalf campaign $k$ in the auction. For each edge $(i,k) \in \calE$, we define two decision variables as follows:  {\em (i)} $x_{ik}$ is the probability that the DSP selects campaign $k$, and {\em (ii)} $b_{ik}$ is the corresponding bid price that the DSP submits in the auction. Interpreted differently, $x_{ik}$ represents a proportional allocation, i.e., the fraction of auctions for impression type $i$ that are allocated to campaign $k$ on average. Note that $b_{ik}$ represents the bid price that the DSP submits to an auction for impression type $i$ \emph{conditional} on the fact that the DSP has selected campaign $k$ for the auction. Related approaches (e.g., as in \cite{chen2011real}) also use bid prices to rank advertisers -- in our approach, the selection of which campaign to bid on behalf of is completely captured by the $x_{ik}$ decision variables and thus the $b_{ik}$ decision variables only determine the actual bid price decisions. Let $\bf{x}, \bf{b} \in \bbR^{|\calE|}$ denote vectors of these quantities, which will represent decision variables in our model.

Let us also define some additional notation used herein. For a given set $S$ and a function $f(\cdot) : S \to \bbR$, let $\arg\max_{x \in S} f(x)$ denote the (possibly empty) set of maximizers of the function $f(\cdot)$ over the set $S$. If $f(\cdot) : \bbR^n \to \bbR$ is a convex function then, for a given $x \in \bbR^n$, $\partial f(x)$ denotes the set of subgradients of $f(\cdot)$ at $x$, i.e., the set of vectors $g$ such that $f(y) \geq f(x) + g^T(y - x)$ for all $y \in \bbR^n$. Finally, let $\mathbf{1}(\cdot)$ denote an indicator function that is equal to $1$ whenever the argument of $\mathbf{1}(\cdot)$ is true and equal to $0$ otherwise. 

\section{Optimization Formulation}
\label{sec:optalg}
Problem \eqref{poi_deterministic} presents our formulation of the allocation and real-time bidding planning problem faced by the DSP.
\begin{equation}\label{poi_deterministic}
\begin{array}{lcl}
\underset{\mathbf{x}, \mathbf{b}}{\mathrm{maximize}} & & \displaystyle \sum_{(i,k) \in \calE} [r_{ik} - \beta^{\max}_i(b_{ik})]s_ix_{ik}\rho_i(b_{ik}) \\
& & \\
\vspace{0.1cm}
\mathrm{subject \ to} & & \textstyle \sum_{i \in \calI_k}r_{ik}s_ix_{ik}\rho_i(b_{ik}) ~\leq~ m_k \ \  \forall  k \in \calK \\
\vspace{0.1cm}
& & \textstyle \sum_{k \in \calK_i} x_{ik} ~\leq~ 1 \ \ \forall i \in \calI \\
& & \mathbf{x}, \mathbf{b} ~\geq~ 0 \ .
\end{array}
\end{equation}
Herein, let $\pi(\bx, \bb) := \sum_{(i,k) \in \calE} [r_{ik} - \beta^{\max}_i(b_{ik})]s_ix_{ik}\rho_i(b_{ik})$ denote the objective function of \eqref{poi_deterministic}. Let us now briefly describe the interpretation of each part of the formulation \eqref{poi_deterministic} above. First, note that the formulation is based on the idea of ``deterministic approximation,'' whereby we assume that all random quantities deterministically take on their expected values. In this formulation, the DSP seeks to maximize its total profit over the planning horizon, while ensuring that each campaign does not spend more than its budget. Indeed, the expected number of times during the planning horizon that the DSP selects campaign $k$ for an auction of impression type $i$ is $s_ix_{ik}$ and the expected number of such auctions that the DSP wins is $s_ix_{ik}\rho_i(b_{ik})$. Furthermore, for each instance that the DSP selects campaign $k$ and wins the corresponding auction for impression type $i$, the expected profit for the DSP is $r_{ik} - \beta^{\max}_i(b_{ik})$. Therefore, the objective function of \eqref{poi_deterministic} represents the expected total profit earned by the DSP throughout the planning horizon. The first set of constraints in \eqref{poi_deterministic} represent the \emph{budget constraints} for the campaigns, which ensure that, in expectation, each campaign does not spend more than its pre-specified budget level. Finally, the second set of constraints in \eqref{poi_deterministic} are referred to as the \emph{supply constraints} for the impression types, which (along with the nonnegativity constraints on $\mathbf{x}$) ensure that the variables $\mathbf{x}$ represent valid probabilities. Note also that these probabilities may sum to a value strictly less than 1, in which case $1 - \sum_{k \in \calK_i} x_{ik}$ represents the probability of electing not to bid when an impression of type $i$ arrives to the ad exchange. For ease of notation, let us denote the supply and nonnegativity constraints on $\mathbf{x}$ using $\calS := \left\{\mathbf{x} : \sum_{k \in \calK_i} x_{ik} ~\leq~ 1 \ \text{for all } i \in \calI, \ \mathbf{x} \geq 0\right\}$.

Note that, due to the joint optimization over both $\bf{x}$ and $\bf{b}$, problem \eqref{poi_deterministic} is generally a nonconvex optimization problem (this is clearly evident, for example, when $B^{\max}_i$ is taken to be uniformly distributed on $[0,1]$ and hence $\rho_i(b_{ik}) = b_{ik}$ for $b_{ik} \in [0,1]$). Despite its nonconvexity, problem \eqref{poi_deterministic} does have some important structural properties that we now highlight. First, if we consider the bid prices $\mathbf{b}$ to be fixed, then the resulting problem in $\mathbf{x}$ is a \emph{linear optimization} problem -- in other words the objective function and constraints can all be expressed as linear functions of $\mathbf{x}$ -- and may be solved very efficiently using off-the-shelf solvers or perhaps a specialized algorithm. Conversely, if we consider $\mathbf{x}$ to be fixed, then the resulting problem in $\mathbf{b}$ is generally still nonconvex but the main ``difficulty'' arises from the budget constraints. Indeed, due to the presence of budget constraints, it may be optimal for the DSP to underbid on a relatively less valuable impression due to the possibility of a more valuable impression arriving in the future. Therefore, whenever a campaign has unlimited budget, it is optimal for the DSP to set $b_{ik} = r_{ik}$, i.e., to bid truthfully. The following Proposition, which will be useful in the development of the Lagrangian relaxation algorithm in Section \ref{sec:LAD}, formalizes this intuition.

\begin{proposition}\label{prop:truth}
Consider the following modification of \eqref{poi_deterministic} without budget constraints:
\begin{equation}\label{poi_prop}
\begin{array}{lcl}
\underset{\mathbf{x}, \mathbf{b}}{\mathrm{maximize}} & & \displaystyle \sum_{(i,k) \in \calE} [r_{ik} - \beta^{\max}_i(b_{ik})]s_ix_{ik}\rho_i(b_{ik}) \\
& & \\
\mathrm{subject \ to} & & \mathbf{x} \in \calS \qquad
\textrm{  and  } \qquad \mathbf{b} ~\geq~ 0 \ ,
\end{array}
\end{equation}
where $\calS := \left\{\mathbf{x} : \sum_{k \in \calK_i} x_{ik} ~\leq~ 1 \ \text{for all } i \in \calI, \ \mathbf{x} \geq 0\right\}$, as defined earlier.
Define $(\mathbf{x}^\ast, \mathbf{b}^\ast)$ by $b_{ik}^\ast := r_{ik}$ for all $(i,k) \in \calE$ and by letting $\bx^\ast$ be an arbitrary optimal solution of the resulting linear optimization problem, i.e.,
$\mathbf{x}^\ast \in \arg\max\limits_{\bx \in \calS}\left\{\sum_{(i,k) \in \calE}\pi_{ik}x_{ik}\right\} $,
where $\pi_{ik} := [r_{ik} - \beta^{\max}_i(r_{ik})]s_i\rho_i(r_{ik})$ for all $(i,k) \in \calE$.
Then, \eqref{poi_prop} is finite and $(\mathbf{x}^\ast, \mathbf{b}^\ast)$ is an optimal solution of \eqref{poi_prop}.
\end{proposition}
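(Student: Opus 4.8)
The plan is to exploit the fact that, in the absence of budget constraints, the objective $\pi(\bx,\bb)=\sum_{(i,k)\in\calE} x_{ik}\,g_{ik}(b_{ik})$ decouples across edges in the bid variables, where I abbreviate $g_{ik}(b):=[r_{ik}-\beta^{\max}_i(b)]\,s_i\,\rho_i(b)$. Since every $\bx\in\calS$ satisfies $\bx\geq 0$, for any fixed allocation the objective is maximized over $\bb\geq 0$ by maximizing each per-edge term $g_{ik}(b_{ik})$ separately (on edges with $x_{ik}=0$ the choice of $b_{ik}$ is irrelevant, and we may set $b_{ik}=r_{ik}$ without loss). The whole proposition therefore reduces to the scalar claim that $g_{ik}$ attains its maximum over $b\geq 0$ at $b=r_{ik}$, together with the observation that once $\bb=\bb^\ast$ is fixed the residual problem in $\bx$ is precisely the stated linear program with coefficients $\pi_{ik}=g_{ik}(r_{ik})$.

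First I would rewrite the per-edge term in a form that avoids dividing by $\rho_i(b)$. Using $\beta^{\max}_i(b)\rho_i(b)=\bbE[B^{\max}_i\,\mathbf{1}(B^{\max}_i\leq b)]$ directly from the definition of the conditional expectation, I obtain $g_{ik}(b)=s_i\big(r_{ik}\rho_i(b)-\bbE[B^{\max}_i\,\mathbf{1}(B^{\max}_i\leq b)]\big)$, which is well defined even where $\rho_i(b)=0$. The heart of the argument is the scalar inequality $g_{ik}(r_{ik})\geq g_{ik}(b)$ for all $b\geq 0$. Intuitively, differentiating the bracket with respect to $b$ (when a density exists) gives a quantity proportional to $(r_{ik}-b)$, so the term increases for $b<r_{ik}$ and decreases for $b>r_{ik}$; to handle a general CDF I would instead bound the difference directly as a Stieltjes integral, splitting into the cases $b<r_{ik}$ and $b>r_{ik}$ and using that $s_i\geq 0$, that $t\leq r_{ik}$ on the interval $[b,r_{ik}]$ in the first case, and that $t\geq r_{ik}$ on $[r_{ik},b]$ in the second. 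This is the step I expect to be the main obstacle, both because it is where the specific structure of second-price auctions enters and because it must be carried out without assuming $\rho_i$ is differentiable.

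With the scalar claim in hand, I would assemble the global bound. For any feasible $(\bx,\bb)$, nonnegativity of $\bx$ together with the scalar claim yields $\pi(\bx,\bb)=\sum_{(i,k)\in\calE}x_{ik}g_{ik}(b_{ik})\leq\sum_{(i,k)\in\calE}x_{ik}g_{ik}(r_{ik})=\sum_{(i,k)\in\calE}\pi_{ik}x_{ik}$. Since $\bx^\ast$ maximizes $\sum_{(i,k)\in\calE}\pi_{ik}x_{ik}$ over $\calS$ and the choice $b^\ast_{ik}=r_{ik}$ realizes $\pi(\bx^\ast,\bb^\ast)=\sum_{(i,k)\in\calE}\pi_{ik}x^\ast_{ik}$, chaining these relations gives $\pi(\bx,\bb)\leq\pi(\bx^\ast,\bb^\ast)$ at every feasible point, which is exactly optimality of $(\bx^\ast,\bb^\ast)$.

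Finally, for finiteness I would note that $\calS$ is a nonempty bounded polytope, since the supply and nonnegativity constraints force $0\leq x_{ik}\leq 1$ for all $(i,k)\in\calE$. Hence the linear program $\max_{\bx\in\calS}\sum_{(i,k)\in\calE}\pi_{ik}x_{ik}$ attains a finite optimum and $\bx^\ast$ is well defined; the upper bound established above then shows that the optimal value of \eqref{poi_prop} is at most this finite number, which establishes that \eqref{poi_prop} is finite.
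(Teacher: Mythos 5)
Your proof is correct, and its skeleton matches the paper's: decouple the objective over edges using $x_{ik}\geq 0$, show the per-edge term is maximized at $b_{ik}=r_{ik}$, then chain the inequalities $\pi(\bx,\bb)\leq\sum_{(i,k)\in\calE}\pi_{ik}x_{ik}\leq\sum_{(i,k)\in\calE}\pi_{ik}x^\ast_{ik}=\pi(\bx^\ast,\bb^\ast)$. The genuine difference is in the key scalar step. The paper disposes of it by observing that $[r_{ik}-\beta^{\max}_i(b)]\rho_i(b)$ is the expected utility of bidding $b$ in a second-price auction with valuation $r_{ik}\geq 0$, and then simply invoking the classical dominant-strategy (truthful bidding) property, with no further argument. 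You instead prove that fact from scratch: rewriting $\beta^{\max}_i(b)\rho_i(b)=\bbE[B^{\max}_i\,\mathbf{1}(B^{\max}_i\leq b)]$ and bounding $g_{ik}(r_{ik})-g_{ik}(b)$ as $s_i\,\bbE[(r_{ik}-B^{\max}_i)\mathbf{1}(b<B^{\max}_i\leq r_{ik})]\geq 0$ when $b<r_{ik}$ and $s_i\,\bbE[(B^{\max}_i-r_{ik})\mathbf{1}(r_{ik}<B^{\max}_i\leq b)]\geq 0$ when $b>r_{ik}$. This buys you a self-contained argument valid for an arbitrary CDF $\rho_i$ (no density assumed), and your reformulation also cleanly handles the edge case $\rho_i(b)=0$, where the conditional expectation defining $\beta^{\max}_i(b)$ is not even well defined---a point the paper glosses over. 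You are also more explicit than the paper about finiteness, deriving it from boundedness of the polytope $\calS$ and the global upper bound, whereas the paper leaves finiteness essentially implicit in the optimality of $(\bx^\ast,\bb^\ast)$. The trade-off is length: the paper's citation of the Vickrey result is shorter and signals the economic interpretation that motivates the whole Lagrangian scheme, while your version is the one you would need if you wanted the proposition to stand on purely measure-theoretic grounds.
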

\begin{proof}
Fix an arbitrary $\mathbf{x} \in \calS$ and consider the resulting problem in the $\mathbf{b}$ variables:
\begin{equation}\label{poi_prop2}
\begin{array}{lcl}
\underset{\mathbf{b}}{\mathrm{maximize}} & & \displaystyle \sum_{(i,k) \in \calE} [r_{ik} - \beta^{\max}_i(b_{ik})]s_ix_{ik}\rho_i(b_{ik}) \\
\mathrm{subject \ to} & & \mathbf{b} ~\geq~ 0 \ .
\end{array}
\end{equation}
We now demonstrate that, regardless of the value of $\mathbf{x}$, an optimal solution of \eqref{poi_prop2} is given by $\bb^\ast$, i.e., by setting $b_{ik}^\ast = r_{ik}$ for all $(i,k) \in \calE$. Indeed, note that \eqref{poi_prop2} is completely separable across the $b_{ik}$ variables. Thus, since $x_{ik} \geq 0$, for each $(i,k) \in \calE$ we simply need to independently solve
\begin{equation}\label{poi_prop3}
\begin{array}{lcl}
\underset{b_{ik}}{\mathrm{maximize}} & & [r_{ik} - \beta^{\max}_i(b_{ik})]\rho_i(b_{ik})\mathbf{1}(x_{ik} > 0) \\
\mathrm{subject \ to} & & b_{ik} ~\geq~ 0 \ ,
\end{array}
\end{equation}
where $\mathbf{1}(x_{ik} > 0)$ is an indicator function that is equal to 1 whenever $x_{ik} > 0$ and 0 otherwise. If $x_{ik} = 0$, then the objective function in \eqref{poi_prop3} is always 0 and hence any value of $b_{ik} \geq 0$, in particular $b_{ik}^\ast := r_{ik}$, is an optimal solution. Otherwise, if $x_{ik} > 0$, then the objective function in \eqref{poi_prop3} is just the expected utility when entering a bid of $b_{ik}$ into a second price auction when the valuation is equal to $r_{ik} \geq 0$, for which the dominant strategy is to bid truthfully. Thus, in either case, it is clear that $b_{ik}^\ast := r_{ik}$ is an optimal solution of \eqref{poi_prop3} and it follows that the vector $\mathbf{b}^\ast$ is an optimal solution of \eqref{poi_prop2}. Recall that $\pi(\bx, \bb)$ denotes the objective function of \eqref{poi_deterministic} and hence also the objective function of \eqref{poi_prop}. Then we have shown that $\pi(\bx, \bb^\ast) \geq \pi(\bx, \bb)$ for all $\bb \geq 0$.
Thus, since $\bx \in \calS$ was selected arbitrarily, we have:
\begin{equation*}
\pi(\bx^\ast, \bb^\ast) \geq \pi(\bx, \bb^\ast) \geq \pi(\bx, \bb) \ \ \text{for all } \bx \in \calS, \bb \geq 0 \ ,
\end{equation*}
which shows that $(\mathbf{x}^\ast, \mathbf{b}^\ast)$ is an optimal solution of \eqref{poi_prop}.
\end{proof}

\begin{remark}\label{calScompute}
Given coefficients $\pi_{ik} \in \bbR$ for each $(i,k) \in \calE$, an optimal solution $\bx^\ast$ of the linear optimization problem
$\max\limits_{\bx \in \calS}\left\{\sum_{(i,k) \in \calE}\pi_{ik}x_{ik}\right\}$
may be computed efficiently in $O(|\calE|)$ time using a simple greedy algorithm. Namely, for each $i \in \calI$, we compute $k^\ast(i) \in \arg\max_{k \in \calK_i}\left\{\pi_{ik}\right\}$, set $x_{ik^\ast(i)} = \mathbf{1}(\pi_{ik^\ast(i)} > 0)$, and set $x_{ik} = 0$ for all other $k \in \calK_i$.
\end{remark}

\section{Lagrangian Dual and Algorithmic Scheme}\label{sec:LAD}
We begin this section with a high-level description of our approach for solving \eqref{poi_deterministic}. Our algorithmic approach is based on a two phase procedure. In the first phase, we construct a suitable dual of \eqref{poi_deterministic}, which turns out to be a convex optimization problem that can be efficiently solved with most subgradient based algorithms. A solution of the dual problem naturally suggests a way to set the bid prices $\mathbf{b}$. In the second phase, we set the bid prices using the previously computed dual solution then we solve the linear optimization problem that results when $\bb$ is fixed in order to recover allocation probabilities $\bx$. 

Let us now construct a Lagrangian dual of the deterministic approximation problem \eqref{poi_deterministic} by relaxing the ``difficult'' budget constraints. We show that the resulting dual problem is a convex optimization problem with only very simple box constraints and that subgradients of the objective function may be efficiently computed. Since our formulation is quite general (for example, there are no strong assumptions made about the distribution of $B^{\max}_i$), we are unable to exploit any special structure of the dual function $L^\ast(\cdot)$ and must resort to simple subgradient based algorithms to solve the dual problem. Nevertheless, subgradient methods offer the advantage of being highly scalable and parallelizable, and moreover our overall two phase procedure does not necessitate a high accuracy solution of the dual problem. 

To start, we introduce multipliers $\lambda \in \bbR^{|\calK|}_{+}$ for the budget constraints in \eqref{poi_deterministic} and form the Lagrangian function:
\begin{equation}\label{original_lag}
\begin{split}
L(\bx, \bb, \lambda) ~:=~ & \textstyle \sum_{(i,k) \in \calE}[r_{ik} - \beta^{\max}_i(b_{ik})]s_ix_{ik}\rho_i(b_{ik}) \\ 
+ & \textstyle \sum_{k \in \calK} \lambda_k \left[ m_k ~-~ \sum_{i \in \calI_k}r_{ik}s_ix_{ik}\rho_i(b_{ik}) \right] \ .
\end{split}
\end{equation}
After rearranging, we may re-express the Lagrangian function:
\begin{equation}\label{rearrange}
\begin{split}
L(\bx, \bb, \lambda) = & \textstyle \sum_{(i,k) \in \calE }[(1 - \lambda_k)r_{ik} - \beta^{\max}_i(b_{ik})]s_ix_{ik}\rho_i(b_{ik})\\ 
& \textstyle + ~\sum_{k \in \calK} \lambda_k m_k
\end{split}
\end{equation}
Note that the above implies a natural interpretation of the dual variables as related to scaling factors, namely $1 - \lambda_k$ for each $k \in \calK$, to reduce the bid prices based on the fact that each campaign has limited budget. The dual function is defined in the standard way:
\begin{equation}\label{dual_fcn_def}
L^\ast(\lambda) := \max_{\bx \in \calS, \bb \geq 0} \ L(\bx, \bb, \lambda) \ ,
\end{equation}
and we define the dual problem as:
\begin{equation}\label{dual_prob}
\begin{array}{lcl}
\underset{\lambda}{\mathrm{minimize}} & & \displaystyle L^\ast(\lambda) \\
\mathrm{subject \ to} & & 0 \leq \lambda_k \leq 1 \ \ \text{for all } k \in \calK \ .
\end{array}
\end{equation}
Note that the nonnegativity constraints above are standard in Lagrangian duality to ensure that \eqref{dual_prob} provides a valid upper bound on \eqref{poi_deterministic}. The upper bound constraints, i.e., $\lambda_k \leq 1$ for all $k \in \calK$, are without loss of generality since whenever $\lambda_k > 1$ the dual function $L^\ast(\lambda)$ is only improved by instead setting $\lambda_k = 1$. It is always the case that $L^\ast(\cdot)$ is a convex function, but in general it may not be differentiable. Nevertheless, Procedure \ref{algo-subgrad} precisely describes how to compute a \emph{subgradient} of $L^\ast(\cdot)$ at $\lambda$ and is based on Proposition \ref{prop:truth}. Theorem \ref{prop_dual_fcn} demonstrates that Procedure \ref{algo-subgrad} computes valid subgradients and also summarizes the most important properties of the dual function $L^\ast(\cdot)$ and the dual problem \eqref{dual_prob}.

\floatname{algorithm}{Procedure}
\begin{algorithm}
\caption{Computing a Subgradient of $L^\ast(\cdot)$}\label{algo-subgrad}
\begin{algorithmic}
\STATE {\bf Input:} $\lambda \in \bbR^{|\calK|}$ such that $0 \leq \lambda_k \leq 1$ for all $k$ . \\
\STATE 1. For each $(i,k) \in \calE$, set:
\begin{equation*}
\begin{split}
&b^\ast(\lambda)_{ik} \gets (1 - \lambda_k)r_{ik} \ , \text{ and } \\
&\pi(\lambda)_{ik} \gets [b^\ast(\lambda)_{ik} - \beta^{\max}_i(b^\ast(\lambda)_{ik})]s_i\rho_i(b^\ast(\lambda)_{ik})
\end{split}
\end{equation*}
\vspace{-3mm}
\STATE 2. Compute $\bx^\ast(\lambda) \in \arg\max_{\bx \in \calS}\left\{  \sum_{(i,k) \in \calE}\pi(\lambda)_{ik}x_{ik} \right\}$ using the greedy algorithm described in Remark \ref{calScompute}.
\STATE 3. For each $k \in \calK$, set:
\begin{equation*}
\textstyle g(\lambda)_k \gets m_k ~-~ \sum_{i \in \calI_k}r_{ik}s_ix^\ast(\lambda)_{ik}\rho_i(b^\ast(\lambda)_{ik}) \ .
\end{equation*}
\vspace{-3mm}
\STATE {\bf Output:} $g(\lambda) \in \partial L^\ast(\lambda)$ .
\end{algorithmic}
\end{algorithm}

\medskip
\begin{theorem}\label{prop_dual_fcn}
We have the following properties:
\begin{itemize}
\item[\emph{(i)}] $L^\ast(\cdot)$ is finite and convex everywhere on $\bbR^{|\calK|}$.
\item[\emph{(ii)}] For any $\lambda \in \bbR^{|\calK|}$ such that $0 \leq \lambda_k \leq 1$ for all $k$, Procedure \ref{algo-subgrad} computes a valid subgradient $g(\lambda) \in \partial L^\ast(\lambda)$.
\item[\emph{(iii)}] Let $P^\ast$ denote the optimal objective function value of the primal problem \eqref{poi_deterministic}, and let $D^\ast$ denote the optimal objective function value of the dual problem \eqref{dual_prob}. Then, for any $(\bx, \bb)$ that is feasible for \eqref{poi_deterministic} and any $\lambda$ that is feasible for \eqref{dual_prob}, it holds that:
\begin{equation*}
\pi(\bx, \bb) \leq P^\ast \leq D^\ast \leq L^\ast(\lambda) \ .
\end{equation*}
\end{itemize}
\end{theorem}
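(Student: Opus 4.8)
The plan is to treat the three parts in sequence, using Proposition \ref{prop:truth} for the two parts that do real work and handling the rest with standard Lagrangian duality arguments. For part \emph{(i)}, I would first observe that for each fixed $(\bx,\bb)$ the Lagrangian $L(\bx,\bb,\lambda)$ in \eqref{original_lag} is an \emph{affine} function of $\lambda$, since the only $\lambda$-dependence is the term $\sum_{k} \lambda_k[m_k - \sum_{i \in \calI_k} r_{ik}s_i x_{ik}\rho_i(b_{ik})]$. By the definition \eqref{dual_fcn_def}, $L^\ast(\lambda)$ is then a pointwise supremum of affine functions of $\lambda$, hence convex on all of $\bbR^{|\calK|}$. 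Finiteness requires only an upper bound on the inner maximization: using $\beta^{\max}_i(\cdot)\geq 0$, $\rho_i(\cdot)\leq 1$, and $0\leq x_{ik}\leq 1$, each summand in \eqref{rearrange} is at most $(1-\lambda_k)_+\, r_{ik}s_i$, so $L^\ast(\lambda)\leq \sum_{k}\lambda_k m_k + \sum_{(i,k)\in\calE}(1-\lambda_k)_+\, r_{ik}s_i<\infty$ for every $\lambda\in\bbR^{|\calK|}$; since the feasible set is nonempty ($\bx=0,\bb=0$), $L^\ast$ is also bounded below, so it is finite everywhere.

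Part \emph{(ii)} is the crux, and where I expect the main work to lie. The key claim is that the point $(\bx^\ast(\lambda),\bb^\ast(\lambda))$ produced in Steps 1--2 of Procedure \ref{algo-subgrad} actually \emph{attains} the maximum defining $L^\ast(\lambda)$. To establish this, I would apply Proposition \ref{prop:truth} after the substitution $\tilde r_{ik} := (1-\lambda_k)r_{ik}$: because $0\leq\lambda_k\leq 1$ forces $\tilde r_{ik}\geq 0$, the inner problem in \eqref{dual_fcn_def} has exactly the form of \eqref{poi_prop} with valuations $\tilde r_{ik}$, so its optimal bid prices are truthful, $b^\ast(\lambda)_{ik}=\tilde r_{ik}=(1-\lambda_k)r_{ik}$, and an optimal $\bx^\ast(\lambda)$ is obtained from the greedy linear-optimization step of Remark \ref{calScompute} with coefficients $\pi(\lambda)_{ik}$ --- precisely the quantities computed by Procedure \ref{algo-subgrad}. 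The nonnegativity $\tilde r_{ik}\geq 0$ is exactly what makes the truthful-bidding argument of Proposition \ref{prop:truth} applicable, so the restriction $\lambda\in[0,1]^{|\calK|}$ is essential here.

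Having established attainment, the subgradient inequality is immediate from affineness. For any $\mu\in\bbR^{|\calK|}$,
\[
L^\ast(\mu)\;\geq\; L(\bx^\ast(\lambda),\bb^\ast(\lambda),\mu)\;=\;L^\ast(\lambda) + g(\lambda)^T(\mu-\lambda),
\]
where the inequality is by definition of $L^\ast$ as a maximum over $\calS\times\{\bb\geq 0\}$, and the equality uses that $\mu\mapsto L(\bx^\ast(\lambda),\bb^\ast(\lambda),\mu)$ is affine with $\lambda$-gradient equal to the constraint slacks $g(\lambda)_k = m_k-\sum_{i\in\calI_k}r_{ik}s_i x^\ast(\lambda)_{ik}\rho_i(b^\ast(\lambda)_{ik})$ from Step 3, together with $L(\bx^\ast(\lambda),\bb^\ast(\lambda),\lambda)=L^\ast(\lambda)$ by attainment. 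This is exactly $g(\lambda)\in\partial L^\ast(\lambda)$; in effect it is the easy, affine case of the Danskin/envelope theorem for Lagrangian duals.

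Finally, part \emph{(iii)} is the standard weak-duality chain. The outer inequalities $\pi(\bx,\bb)\leq P^\ast$ and $D^\ast\leq L^\ast(\lambda)$ are definitional, since $P^\ast$ is the primal optimum over feasible $(\bx,\bb)$ and $D^\ast$ is the dual optimum over feasible $\lambda$. For the middle inequality $P^\ast\leq D^\ast$, I would note that for any primal-feasible $(\bx,\bb)$ the budget constraints of \eqref{poi_deterministic} make each bracket $m_k-\sum_{i\in\calI_k}r_{ik}s_i x_{ik}\rho_i(b_{ik})$ nonnegative, so with $\lambda_k\geq 0$ we obtain $\pi(\bx,\bb)\leq L(\bx,\bb,\lambda)\leq L^\ast(\lambda)$; taking the supremum over primal-feasible $(\bx,\bb)$ and then the infimum over dual-feasible $\lambda$ yields $P^\ast\leq D^\ast$, completing the chain.
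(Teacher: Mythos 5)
Your proposal is correct and follows essentially the same route as the paper: the crux, part \emph{(ii)}, is handled exactly as in the paper's proof --- apply Proposition \ref{prop:truth} with the modified valuations $(1-\lambda_k)r_{ik} \geq 0$ (this is where $0 \leq \lambda_k \leq 1$ enters) to show that Steps 1--2 of Procedure \ref{algo-subgrad} attain the maximum in \eqref{dual_fcn_def}, and then conclude via the affine-in-$\lambda$ envelope inequality $L^\ast(\mu) \geq L(\bx^\ast(\lambda),\bb^\ast(\lambda),\mu)$. The only divergences are minor and in your favor: for finiteness in \emph{(i)} you bound $L^\ast(\lambda)$ directly by $\sum_k \lambda_k m_k + \sum_{(i,k)\in\calE}(1-\lambda_k)_+ r_{ik}s_i$, which covers all $\lambda \in \bbR^{|\calK|}$ uniformly, whereas the paper derives finiteness from Proposition \ref{prop:truth} and must appeal to a ``simple extension'' of it (allowing negative valuations) to cover $\lambda_k > 1$; and you spell out the weak-duality chain in \emph{(iii)}, whose proof the paper omits as standard.
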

\begin{proof}
Fix $\lambda \in \bbR^{|\calK|}$ such that $\lambda_k \leq 1$ for all $k$ and consider Proposition \ref{prop:truth} where we replace $r_{ik}$ with the modified value $(1 - \lambda_k)r_{ik}$. It is clear that Proposition \ref{prop:truth} also holds with these modified values. Then, in this case, by \eqref{rearrange} it follows that the subproblem appearing in the definition of $L^\ast(\cdot)$ in \eqref{dual_fcn_def} is exactly the same as \eqref{poi_prop} (except for the term $\sum_{k \in \calK} \lambda_k m_k$, which is just a constant since $\lambda$ is fixed).
Therefore, by Proposition \ref{prop:truth}, it holds that $L^\ast(\lambda)$ is finite. The case when $\lambda_k$ is possibly greater than 1 for some $k$ is requires a simple extension of Proposition \ref{prop:truth} that allows for possibly negative values of $r_{ik}$. Convexity of $L^\ast(\cdot)$ follows since $L^\ast(\cdot)$ is a pointwise maximum of linear functions. 

To prove {\em (ii)}, again by Proposition \ref{prop:truth} it follows that Steps (1.) and (2.) of Procedure \ref{algo-subgrad} are computing a solution of the subproblem in the definition of $L^\ast(\cdot)$ given in \eqref{dual_fcn_def}, i.e., it holds that $(\bx^\ast(\lambda), \bb^\ast(\lambda)) \in \arg\max_{\bx \in \calS, \bb \geq 0}L(\bx, \bb, \lambda)$. By \eqref{original_lag}, Step (3.) of Procedure \ref{algo-subgrad} is computing the partial gradient of $L(\bx^\ast(\lambda), \bb^\ast(\lambda), \lambda)$ (holding $(\bx^\ast(\lambda), \bb^\ast(\lambda))$ fixed), i.e., it holds that $g(\lambda) = \nabla_{\lambda}L(\bx^\ast(\lambda), \bb^\ast(\lambda), \lambda)$. Therefore, for any $\lambda^\prime \in \bbR^{|\calK|}$ it holds that:
\begin{align*}
L^\ast(\lambda^\prime) &\geq L(\bx^\ast(\lambda), \bb^\ast(\lambda), \lambda^\prime) \\
&\geq L(\bx^\ast(\lambda), \bb^\ast(\lambda), \lambda) + g(\lambda)^T(\lambda^\prime - \lambda) \\
&= L^\ast(\lambda) + g(\lambda)^T(\lambda^\prime - \lambda) \ ,
\end{align*}
which by definition implies that $g(\lambda) \in \partial L^\ast(\lambda)$. The first inequality above follows from the definition of $L^\ast(\cdot)$, the second inequality holds since $L(\bx, \bb, \lambda)$ is convex in $\lambda$, and the equality holds by $(\bx^\ast(\lambda), \bb^\ast(\lambda)) \in \arg\max_{\bx \in \calS, \bb \geq 0}L(\bx, \bb, \lambda)$.
Finally, {\em (iii)} is standard in Lagrangian duality and we omit its proof.
\end{proof}

Algorithm \ref{algo-twophase} presents our two phase procedure for obtaining an approximate solution $(\hat\bx, \hat\bb)$ of problem \eqref{poi_deterministic}. In Phase 1, we solve the dual problem \eqref{dual_prob}. As mentioned previously, we suggest the use of simple subgradient methods (see, for example, \cite{nesterov2013introductory} and the references therein), with the use of Procedure \ref{algo-subgrad} to compute subgradients, in order to solve this problem. In our experiments in Section \ref{sec:expts}, we use the basic method of projected subgradient descent with step-sizes proportional to $1/\sqrt{T}$ where $T$ is the iteration counter. In this case, as evident from Procedure \ref{algo-subgrad}, the subgradients will remain bounded and therefore we may apply classical convergence results for this method, which state that the objective function value optimality gap converges to zero at the rate of $O(1/\sqrt{T})$ \cite{nesterov2013introductory}. Moreover, the per iteration cost of this method is dominated by the cost of computing a subgradient, which, as is clear from Procedure \ref{algo-subgrad} and Remark \ref{calScompute}, is $O(|\calE|)$.

\floatname{algorithm}{Algorithm}
\begin{algorithm}
\caption{Two Phase Lagrangian Relaxation-based Scheme for Problem \eqref{poi_deterministic}}\label{algo-twophase}
\begin{algorithmic}
\STATE {\bf Phase 1: Solve Lagrangian Relaxation}
\INDSTATE Solve the dual problem \eqref{dual_prob} to near global optimality 
\INDSTATE using a subgradient method, and return dual variables $\hat\lambda$
\INDSTATE and dual objective value $\hat D \gets L^\ast(\hat \lambda)$.
\STATE {\bf Phase 2: Primal Recovery}
\INDSTATE 1. Set bid prices $\hat{\bb}$: $\hat{b}_{ik} \gets (1 - \hat{\lambda}_k)r_{ik}$ for all $(i,k) \in \calE$.
\INDSTATE 2. Consider the primal problem \eqref{poi_deterministic} with the $\bb$ variables 
\INDSTATE fixed at the values $\hat{\bb}$, and solve the resulting linear 
\INDSTATE optimization problem to obtain allocation probabilities $\hat{\bx}$.
\STATE {\bf Output:} Approximate primal solution $(\hat{\bx}, \hat{\bb})$, primal objective value $\pi(\hat\bx, \hat\bb)$, and dual upper bound $\hat D$.
\end{algorithmic}
\end{algorithm}

In Phase 2 of Algorithm \ref{algo-twophase}, we suggest a heuristic to construct an approximate primal solution $(\hat{\bx}, \hat{\bb})$ based on the previously computed dual solution $\hat \lambda$. First, we use the natural correspondence suggested by \eqref{rearrange} to set the bid prices $\hat{\bb}$. Then, we fix these bid prices and solve the resulting linear optimization problem from \eqref{poi_deterministic} to obtain allocation probabilities $\hat{\bx}$. In our experiments, we use the off-the-shelf solver Gurobi although more sophisticated and scalable approaches may also be employed. Finally, Algorithm \ref{algo-twophase} outputs the approximate primal solution $(\hat{\bx}, \hat{\bb})$ along with the dual objective value $\hat D$. Using item {\em (iii)} of Theorem \ref{prop_dual_fcn}, we may use $\hat D$ to obtain a useful bound on the suboptimality of $(\hat{\bx}, \hat{\bb})$, namely $\pi(\hat{\bx}, \hat{\bb}) \leq P^\ast \leq \hat D$. 

\section{Computational Experiments}\label{sec:expts}
In this section, we present the results of several computational experiments wherein we applied our two-phase solution procedure to synthetic data examples and compared its performance to a baseline policy. Before discussing the experimental results, it is important to clarify how our model, and in particular the output of Algorithm \ref{algo-twophase}, should be applied in a practical, online environment. Policy \ref{algo-online} precisely describes the sequence of events that occur when a new impression of type $i$ arrives to ad exchange and also describes how the decision variables $(\hat{\bx}, \hat{\bb})$ resulting from Algorithm \ref{algo-twophase} would be used to make decisions in real-time. (Recall that the decision variables $\bx$ represent probabilities of selecting campaigns, and that $1 - \sum_{k \in \calK_i} x_{ik}$ represents the probability of not participating in an auction for a new impression of type $i$. In Policy \ref{algo-online} below, the symbol ``0'' is used to encode a ``null campaign'' that represents this option of refraining from participating in the auction.)

\floatname{algorithm}{Policy}
\begin{algorithm}
\caption{Online Policy Implied by Algorithm \ref{algo-twophase}
}\label{algo-online}
\begin{algorithmic}
\STATE {\bf Input:} Approximate primal solution $(\hat{\bx}, \hat{\bb})$ from Algorithm \ref{algo-twophase} and new impression arrival $i \in \calI$.
\\ $ \ $ \\
\STATE 1. Sample a campaign $\tilde{k} \in \calK_i \cup \{0\}$ according to the distribution implied by the values $x_{ik}$ for $k \in \calK_i$ and $1 - \sum_{k \in \calK_i} x_{ik}$. If $\tilde k = 0$ (the sampled campaign is null) or the sampled campaign $\tilde k$ has depleted its budget, then break.
\STATE 2. Enter bid price $\hat{b}_{i\tilde{k}}$. If the auction is won, then pay the ad exchange an amount to equal to the second price. If the auction is not won, then break.
\STATE 3. Show an ad for campaign $\tilde k$. If a click happens, then deduct $q_{\tilde k}$ from the budget of campaign $\tilde k$ and earn revenue $q_{\tilde k}$.
\end{algorithmic}
\end{algorithm}

We also refer to Policy \ref{algo-online} as the ``Lagrangian policy.'' Notice that Policy \ref{algo-online} is particularly conservative in dealing with what happens when a campaign depletes its budget. Indeed, if a campaign with depleted budget is sampled then the DSP does not participate in the auction. Policy \ref{algo-online} may be improved by incorporating the idea of model predictive control whereby Algorithm \ref{algo-twophase} is rerun every time a campaign depletes its budget, or possibly at periodic time intervals. We compare Algorithm \ref{algo-twophase} and correspondingly Policy \ref{algo-online} against a simple ``greedy policy'' that is often employed in practice. Indeed, the greedy policy has the same basic flow of events as Policy \ref{algo-online} with two major differences in how decisions are made:  {\em (i)} at Step (1.) the greedy policy selects, among those campaigns in $\calK_i$ with budgets that are not yet depleted, the campaign $\tilde k$ with the largest eCPI value of $r_{ik}$, and {\em (ii)} at Step (2.) the greedy policy uses $r_{i\tilde{k}}$, the eCPI value of the selected campaign, as the bid price.

\paragraph{Synthetic Data Examples} Let us now describe how the synthetic data examples were generated and how the corresponding simulations were conducted. Throughout this discussion, all relevant random variables are generated independently unless otherwise mentioned. Furthermore, throughout our experiments, it is assumed that the optimization model developed herein is \emph{correctly specified} in that all distributional information used by our model (e.g., in \eqref{poi_deterministic} and Algorithm \ref{algo-twophase}) accurately reflect the corresponding random variables in our simulations. 

Now, to generate our synthetic instances, we first fix the sizes of the sets $\calK$ and $\calI$, and for each campaign $k \in \calK$ (and also for each impression type $i \in \calI$) we generate a ``quality score'' $Q_k$ (resp., $Q_i$) that is uniformly distributed on $[0,1]$. The quality scores are intended to reflect the ``desirability'' of each impression type and each campaign and are used to generate the main parameters of our model. Indeed, for each impression type $i \in \calI$, the set $\calK_i$ is constructed by sampling edges independently with probability $Q_i$. Hence $|\calK_i| \sim \text{Bin}(|\calK|, Q_i)$, where $\text{Bin}(n, p)$ denotes a binomial random variable with $n$ trials and success probability parameter $p$. Moreover, for each impression type $i \in \calI$, $B^{\max}_i$ is taken to be the maximum of $\text{Bin}(M, Q_i)$ independent random variables that are uniformly distributed on $[0,1]$, where $M$ is an integer parameter dictating the ``size of the market.'' The click-through-rate values $\theta_{ik}$ are defined by $\theta_{ik} := Q_i \cdot Q_k$.

In all of our experiments, we set $|\calK| = 100$, $M = 10$, $s_i = 5000$ for all $i \in \calI$, and the CPC value $q_k = 1$ for all $k \in \calK$. In our first experiment, we generated a single problem instance, referred to as Example A, that additionally had $|\calI| = 100$ and the budget parameters set to $m_k = 50$ for all $k \in \calK$. In this case, as verified by the dual upper bound $\hat D$, Algorithm \ref{algo-twophase} was able to solve problem \eqref{poi_deterministic} to within 13\% of optimality. We compared the Policy \ref{algo-online} implied by Algorithm \ref{algo-twophase} to the ``greedy policy'' described earlier by simulating the impression arrival, real-time bidding, and click processes. In our simulations, we assume that impressions arrive to the ad exchange according to $|\calI|$ independent Poisson processes that are ``merged'' together. This assumption implies that $S_i$ is a Poisson random variable, and the time horizon $T$ of the overall arrival process is set so that $s_i = 5000$ for each $i \in \calI$.

\paragraph{Results} The top left table in Figure \ref{thefig} presents the main results of our first experiment on Example A. We ran 500 simulation runs comparing our Lagrangian relaxation approach, i.e., Policy \ref{algo-online} to the greedy baseline policy. Each policy saw the same exact sequence of impression arrivals and the same sequence of realized $B^{\max}_i$ values during each individual simulation run. For each simulation run, we computed the relative profit, relative cost, and relative revenue of the two policies, where each relative statistic is computed as the Lagrangian statistic relative to the greedy baseline, e.g., $\text{Relative Profit} := \frac{\text{Lagrangian Profit}}{\text{Greedy Profit}}$. The results in the tables are averaged over 500 such simulation runs. As the top left table demonstrates, the Lagrangian policy is able to achieve significantly higher profit levels and lower costs than the greedy baseline. Interestingly, the Lagrangian policy also achieves lower revenue levels than the greedy policy. This makes good intuitive sense since the Lagrangian policy uses bid prices that are shaded down by a factor of $1 - \lambda_k$ as compared to the greedy policy, and moreover the Lagrangian policy should make smarter allocation decisions whereby, under the Lagrangian policy, a particularly valuable campaign would wait for better opportunities before depleting its budget as compared to the greedy policy. The bottom left table in Figure \ref{thefig} reports the budget utilization (defined as total revenue divided by $\sum_{k \in \calK}m_k$) and profit margin (defined as total profit divided by total revenue) statistics for each policy, which confirms our intuition and the results presented in top left table. Example B constitutes our second experiment, whereby we took the same exact problem instance and made one modification, namely instead of using constant budgets across the different campaigns we allowed the budget to be correlated with the quality score of each campaign so that $m_k := 50Q_k$. As Figure \ref{thefig} demonstrates, the profit improvement of the Lagrangian policy over the greedy baseline is even more dramatic in this case.

\begin{figure}[h!]
\vspace{-2mm}
\begin{multicols}{2}
\scalebox{0.68}{
\begin{tabular}{c}
\begin{tabular}{ccc}
\hline
{\bf (Lag./Gr.)}	   & {\bf Example A} & {\bf Example B} \\ \hline
{\bf Relative Profit}  & {\Large 1.257}     & {\Large 1.576}     \\
{\bf Relative Cost}    & {\Large 0.286}     & {\Large 0.431}     \\
{\bf Relative Revenue} & {\Large 0.759}     & {\Large 0.677}     \\ \hline
\end{tabular}
\\ \\ \\
\begin{tabular}{ccccc}
                   & \multicolumn{2}{c}{{\bf Example A}} & \multicolumn{2}{c}{{\bf Example B}} \\ \hline
                   & {\bf Lag.}      & {\bf Gr.}      & {\bf Lag.}      & {\bf Gr.}      \\ \hline
{\bf Budget Util.} & {\Large 0.483}           & {\Large 0.636}      & {\Large 0.542}           & {\Large 0.801}      \\
{\bf Profit/Revenue}     & {\Large 0.807}           & {\Large 0.487}       & {\Large 0.500}          & {\Large 0.215}       \\ \hline
\end{tabular}
\end{tabular}
}
\vfill\null
\columnbreak
\includegraphics[scale = 0.2]{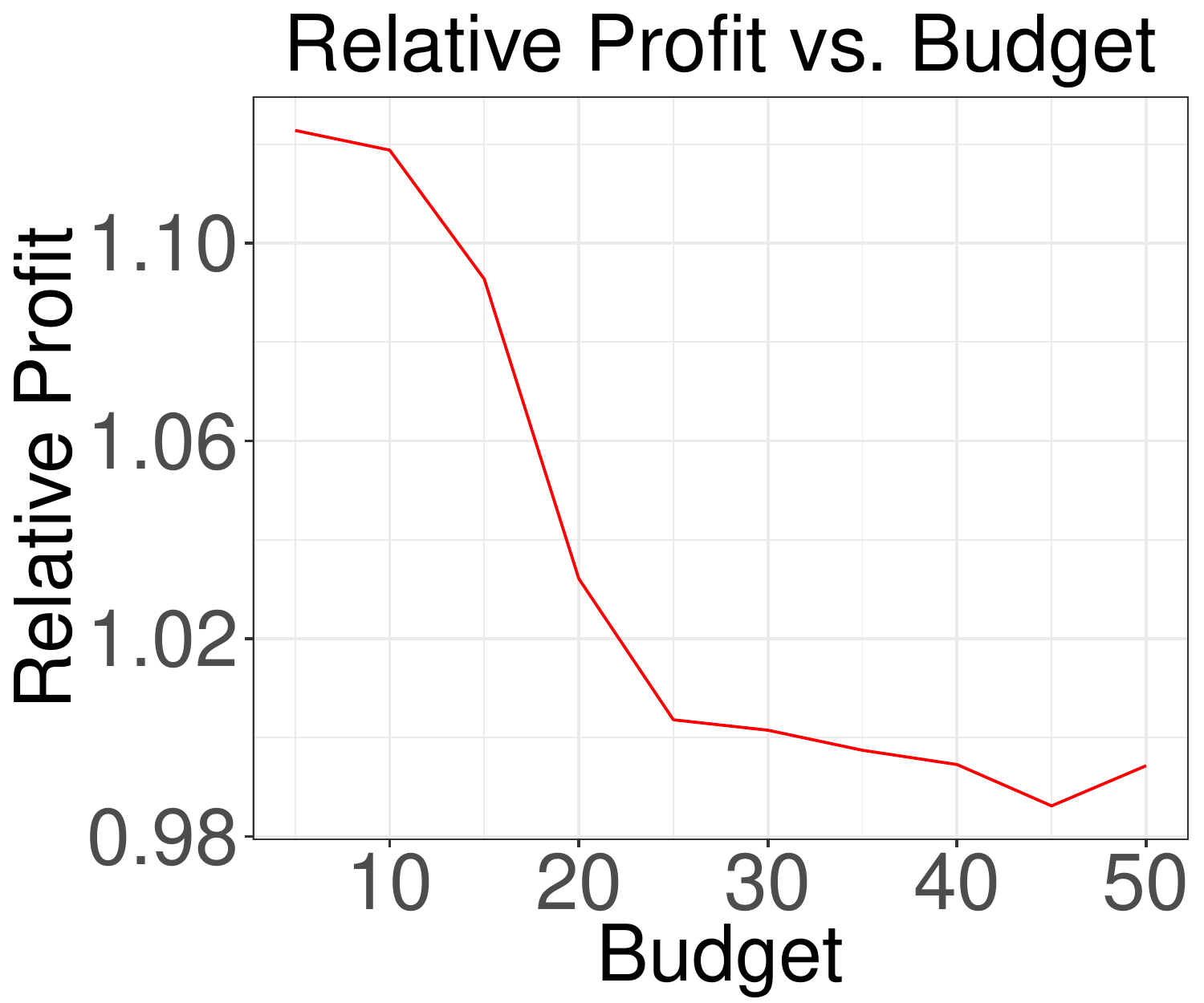}
\end{multicols}
\vspace{-8mm}
\caption{Lagrangian two-phase policy vs. greedy policy}
\label{thefig}
\end{figure}

In our third experiment, we generated a fresh problem instance using all of the same parameters as before except for two differences:  {\em (i)} we set $|\calI| = 10$, and {\em (ii)} we varied the budget parameters $m_k \in \{5,10,15,20,25,30,35,40,45,50\}$ (also we took $m_k$ to be constant across the different campaigns) and reran Algorithm \ref{algo-twophase} for each of the 10 budget values. The right side of Figure \ref{thefig} plots the relative profit of the two policies versus these budget values, and for each of the 10 budget values the relative profit statistic was averaged over 500 simulation runs. Clearly, the Lagrangian policy has significantly larger profits over the greedy policy for small budget values, but as the budget increases this improvement is diminished. This makes sense since the Lagrangian policy is based on accounting for the budget constraints in \eqref{poi_deterministic} via dual variables. Indeed, as the budget values become larger, the budget constraints are less active and Proposition \ref{prop:truth} implies that Policy \ref{algo-online} and the greedy policy are exactly the same when $m_k$ is large enough for each $k \in \calK$.

Let us conclude this section by mentioning a few directions for future research. First, it would be very valuable to also perform some computational experiments comparing the Lagrangian policy to the greedy policy using a real advertising dataset. Second, it would be very interesting to extend our methodology, in particular problem \eqref{poi_deterministic} and Algorithm \ref{algo-twophase} to different pricing models, such as the CPM pricing model, with performance constraints. Finally, it would be interesting to examine the benefits of more sophisticated stochastic or robust optimization approaches that more carefully account for the uncertainty in the impression arrivals and the real-time bidding environment. The authors plan to pursue all of these directions in future research.

\bibliographystyle{ACM-Reference-Format}
\bibliography{reference} 

\end{document}